\documentclass{amsart}

\newtheorem{theorem}{Theorem}[section]
\newtheorem{lemma}[theorem]{Lemma}
\newtheorem{proposition}[theorem]{Proposition}

\theoremstyle{definition}
\newtheorem{definition}[theorem]{Definition}

\numberwithin{equation}{section}

\newcommand{\bes}{\begin{equation*}}
\newcommand{\beq}{\begin{equation}}
\newcommand{\ees}{\end{equation*}}
\newcommand{\eeq}{\end{equation}}

\newcommand{\T}{\mathbb{T}}
\newcommand{\D}{\mathbb{D}}
\newcommand{\N}{\mathbb{N}}
\newcommand{\Rp}{\text{Re }}

\newcommand{\ns}[1]{#1^{\star}}
\newcommand{\vs}[1]{#1^{\diamond}}
\newcommand{\conj}[1]{\overline{#1}}
\DeclareMathOperator{\sgn}{sgn}

\begin{document}

\title{Continuity of Extremal Elements in Uniformly Convex Spaces}
\author{Timothy Ferguson}
\address{
Department of Mathematics\\1326 Stevenson Center\\Vanderbilt University\\Nashville, TN 37240}
\email{timothy.j.ferguson@vanderbilt.edu}

\date{\today}
\subjclass[2000]{Primary 30H05; Secondary 46B99}

\begin{abstract}
In this paper, we study the problem of finding the 
extremal element for a linear functional over a uniformly convex 
Banach space.  We show that a unique extremal element exists
and depends continuously on the linear functional, and vice versa. 
Using this, we simplify and clarify 
Ryabykh's proof that for any linear functional on a uniformly convex 
Bergman space with kernel in a certain Hardy space, 
the extremal functional belongs to the corresponding Hardy space.
\end{abstract}

\maketitle

Our purpose is to study extremal problems over uniformly convex 
Banach spaces. 
For a given linear functional $\phi$, what elements 
$x$ of the space with norm $\|x\|=1$ maximize 
$\Rp \phi(x)$?  Because of the uniform convexity, this problem will always 
have a unique solution, which is called the extremal element. 
In this paper, we show that the extremal element 
depends continuously on the functional $\phi$, and it 
can be approximated by the solutions of the same extremal problem over 
subspaces of the original Banach space.  
Moreover, we show that an extremal element can arise from at most one 
linear functional of unit norm, and that the functional depends 
continuously on the extremal element. 
Using these results, we 
give a streamlined proof of a theorem of Ryabykh, which says 
that for a functional defined on the Bergman space $A^p$, with 
kernel in the Hardy space $H^q$, the extremal element is in $H^p,$ where 
$1 < p < \infty$ and $\tfrac{1}{p} + \tfrac{1}{q} = 1.$

\section{Uniform convexity and extremal problems}

Let $X$ be a complex Banach space and let 
$X^*$ be its dual space. 
For a given linear functional 
$\phi \in X^*$ with $\phi \ne 0,$ 
we are interested in all elements $x \in X$ with norm 
$\|x\| = 1$ such that 
\begin{equation}\label{norm1}
\Rp \phi(x) = \sup_{\|y\|=1} \Rp \phi(y) = \| \phi \|.
\end{equation}
Such a problem is referred to as an extremal problem, and $x$ is an extremal 
element. 

This problem has been studied extensively for certain Banach spaces.  
For example, for the Hardy spaces $H^p$ where $1\le p \le \infty$, 
the problem was investigated independently around 1950 
by S. Ya. Khavinson, and by Rogosinski and Shapiro.  
(See Duren \cite{D_Hp}, Chapter 8.) 
The problem for the Bergman spaces $A^p$ for $1 \le p < \infty$ has been 
studied as well, for example by Vukoti\'{c} \cite{Dragan} and Khavinson and 
Stessin \cite{Khavinson_Stessin}.  We will define the Hardy and 
Bergman spaces in Section \ref{section_hp_ap}. 

A closely related problem is that of finding $x\in X$
such that 
\begin{equation}\label{value1}
\phi(x) = 1 \qquad \text{  and } \qquad 
\|x\| = \inf_{\phi(y) = 1} \|y\|.
\end{equation}
If $x$ solves the problem \eqref{norm1}, then $\frac{x}{\phi(x)}$ 
solves the problem 
\eqref{value1}, and if $x$ solves \eqref{value1}, then 
$\frac{x}{\|x\|}$ solves \eqref{norm1}. To standardize notation, we 
will often denote solutions to \eqref{norm1} by 
$\ns{x}$ and solutions to \eqref{value1} by $\vs{x}.$  

The problem \eqref{norm1} need not have a solution, and if it does the 
solution need not be unique.  However, if the Banach space 
$X$ is {\it uniformly convex}, there will always be a unique solution.

\begin{definition} A Banach space $X$ is said to be uniformly convex if 
for each $\varepsilon > 0$, there is a $\delta > 0$ such that for all 
$x,y \in X$ with $\|x\| = \|y\| = 1,$ 
$$\left\| \tfrac12(x+y) \right\| > 1 - \delta \qquad \text{ implies } 
\qquad 
\|x-y\| < \varepsilon.
$$
\end{definition}
An equivalent statement is that if $\{x_n\}$ and $\{y_n\}$ are 
sequences in $X$ such that $\|x_n\| = \|y_n\| = 1$ and 
$\|x_n + y_n\| \rightarrow 2,$ then $\|x_n - y_n\| \rightarrow 0.$
Uniform convexity was introduced by Clarkson \cite{Clarkson}, 
who proved that the $L^p$ spaces are uniformly convex for 
$1<p<\infty$.

\begin{proposition}\label{uc_sequence}
Let $X$ be a uniformly convex Banach space, and let 
$\{x_n\}$ and $\{y_n\}$ be sequences in $X.$  If for some 
$d > 0,$ $\|x_n\| \rightarrow d$ and $\|y_n\| \rightarrow d$ as 
$n \rightarrow \infty,$ and $\|x_n + y_n\| \rightarrow 2d,$ then 
$\|x_n - y_n\| \rightarrow 0.$ 
\end{proposition}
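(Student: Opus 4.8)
The idea is to reduce to the definition of uniform convexity by normalizing. If $d = 0$ the statement is trivial since then $\|x_n\| \to 0$ and $\|y_n\| \to 0$, so $\|x_n - y_n\| \to 0$ by the triangle inequality; so assume $d > 0$. Then for $n$ large, $x_n \ne 0$ and $y_n \ne 0$, and we may set $\tilde{x}_n = x_n / \|x_n\|$ and $\tilde{y}_n = y_n / \|y_n\|$, which are unit vectors. The plan is to show $\|\tilde{x}_n + \tilde{y}_n\| \to 2$, invoke the sequential form of uniform convexity to conclude $\|\tilde{x}_n - \tilde{y}_n\| \to 0$, and then transfer this back to $\|x_n - y_n\| \to 0$.

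For the first transfer, write $x_n + y_n = \|x_n\| \tilde{x}_n + \|y_n\| \tilde{y}_n$. Then
\begin{equation*}
\|\tilde{x}_n + \tilde{y}_n\| \ge \frac{1}{\|x_n\|}\Bigl( \|x_n + y_n\| - \bigl| \|x_n\| - \|y_n\| \bigr| \Bigr),
\end{equation*}
or more simply estimate $\| x_n + y_n - (\tilde x_n + \tilde y_n)\| \le \bigl| \|x_n\| - 1\bigr| + \bigl| \|y_n\| - 1\bigr| \to 0$ after rescaling so that $d=1$ (we can reduce to $d=1$ at the outset by replacing $x_n, y_n$ with $x_n/d, y_n/d$). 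With $d = 1$: $\|x_n\| \to 1$, $\|y_n\| \to 1$, $\|x_n + y_n\| \to 2$, and $\|(\tilde{x}_n + \tilde{y}_n) - (x_n + y_n)\| \to 0$, so $\|\tilde{x}_n + \tilde{y}_n\| \to 2$. Uniform convexity (sequential form) gives $\|\tilde{x}_n - \tilde{y}_n\| \to 0$. Finally $\|x_n - y_n\| \le \|x_n - \tilde{x}_n\| + \|\tilde{x}_n - \tilde{y}_n\| + \|\tilde{y}_n - y_n\| = \bigl|\|x_n\| - 1\bigr| + \|\tilde{x}_n - \tilde{y}_n\| + \bigl|\|y_n\| - 1\bigr| \to 0$, as desired.

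There is no real obstacle here; the only thing to be careful about is the bookkeeping of the reduction to $d = 1$ and making sure the normalizations $\tilde{x}_n, \tilde{y}_n$ are well-defined (which holds for all sufficiently large $n$, and finitely many terms do not affect a limit). One could alternatively avoid the sequential reformulation and argue directly by contradiction: if $\|x_n - y_n\| \not\to 0$, pass to a subsequence with $\|x_n - y_n\| \ge \varepsilon$ for some $\varepsilon > 0$, and derive a contradiction with the $\delta$ from the definition applied to $\tilde x_n, \tilde y_n$. Either route is short; I would present the direct normalization argument since it is the cleanest.
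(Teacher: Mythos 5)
Your proposal is correct and follows essentially the same route as the paper: normalize to unit vectors, show the sum of the normalized vectors has norm tending to $2$ (controlling the error by $\bigl|\,\|x_n\|-d\,\bigr|+\bigl|\,\|y_n\|-d\,\bigr|$), apply the sequential form of uniform convexity, and transfer back by the triangle inequality. The only cosmetic difference is your preliminary rescaling to $d=1$, whereas the paper carries the factors $\|x_n\|$, $\|y_n\|$ through the estimates directly.
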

\begin{proof}
We have that 
\bes
\begin{split}
2 &\ge \bigg \| \frac{x_n}{\|x_n\|} + \frac{y_n}{\|y_n\|} \bigg\| 
= \frac{1}{\|x_n\|} \bigg\| x_n + \frac{\|x_n\|}{\|y_n\|} y_n \bigg\| \\
&\ge \frac{1}{\|x_n\|}\|x_n + y_n\| - 
    \frac{1}{\|x_n\|}\bigg\|y_n - \frac{\|x_n\|}{\|y_n\|} y_n \bigg\| 
\rightarrow 2
\end{split}
\ees
as $n \rightarrow \infty.$ Hence by uniform convexity, 
$$ \bigg\|\frac{x_n}{\|x_n\|} - \frac{y_n}{\|y_n\|}\bigg\| \rightarrow 0.$$
But then
\bes 
\begin{split}
\|x_n - y_n\| &= \|x_n\|\ \bigg\|\frac{x_n}{\|x_n\|} - \frac{y_n}{\|x_n\|}
\bigg\| \\
&\le\|x_n\|\ \bigg\|\frac{x_n}{\|x_n\|} - \frac{y_n}{\|y_n\|}\bigg\| + 
\|x_n\| \left\|y_n\left(\frac{1}{\|x_n\|} - \frac{1}{\|y_n\|}\right)\right\| 
\rightarrow 0.
\end{split}
\ees
\end{proof}

The following result is basic (see for instance \cite{D_Ap}, Section 2.2) 
and gives immediately the existence and uniqueness 
of extremal elements.
\begin{proposition}\label{cl_conv} 
A closed convex subset of a uniformly convex 
Banach space has exactly one element of smallest norm.
\end{proposition}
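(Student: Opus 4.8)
The plan is to produce the minimizer as the limit of a minimizing sequence, using Proposition~\ref{uc_sequence} to force that sequence to converge, and then to read off uniqueness from the same proposition applied to constant sequences. Let $C$ be the closed convex set in question; we may assume $C \ne \emptyset$, since otherwise there is nothing to prove. Put $d = \inf_{x \in C}\|x\|$. If $d = 0$, any minimizing sequence converges to $0$, and $0 \in C$ because $C$ is closed; since $0$ is plainly the only element of norm $0$, this settles the case $d=0$. So from now on assume $d > 0$.

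For existence, choose $x_n \in C$ with $\|x_n\| \to d$. I claim $\{x_n\}$ is Cauchy. If it were not, there would be an $\varepsilon > 0$ and subsequences $\{x_{n_k}\}$ and $\{x_{m_k}\}$ with $\|x_{n_k} - x_{m_k}\| \ge \varepsilon$ for every $k$. By convexity $\tfrac12(x_{n_k} + x_{m_k}) \in C$, so $\|x_{n_k} + x_{m_k}\| \ge 2d$, while the triangle inequality gives $\|x_{n_k} + x_{m_k}\| \le \|x_{n_k}\| + \|x_{m_k}\| \to 2d$. Hence $\|x_{n_k} + x_{m_k}\| \to 2d$, and Proposition~\ref{uc_sequence} (with the value $d>0$) yields $\|x_{n_k} - x_{m_k}\| \to 0$, contradicting the choice of the subsequences. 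Thus $\{x_n\}$ is Cauchy, and by completeness it converges to some $x$, which lies in $C$ since $C$ is closed, and which satisfies $\|x\| = \lim_n \|x_n\| = d$ by continuity of the norm. So $x$ is an element of smallest norm.

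For uniqueness, suppose $x, y \in C$ both have norm $d$. Then $\tfrac12(x+y) \in C$ forces $\|x+y\| \ge 2d$, and the triangle inequality forces $\|x+y\| \le 2d$, so $\|x+y\| = 2d$. Applying Proposition~\ref{uc_sequence} to the constant sequences $x_n \equiv x$, $y_n \equiv y$ (equivalently, applying the definition of uniform convexity to $x/d$ and $y/d$) gives $\|x - y\| = 0$, i.e.\ $x = y$. The only steps that need a little care are the passage to subsequences in the proof that the minimizing sequence is Cauchy — this is what makes Proposition~\ref{uc_sequence} genuinely applicable — and the separate handling of $d = 0$, where that proposition does not apply; the rest is routine.
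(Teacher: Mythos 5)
Your proof is correct. Note that the paper itself gives no proof of this proposition, citing Duren and Schuster instead; your argument is the standard minimizing-sequence proof that such a reference would supply, and you handle the two points that actually require care — the separate treatment of $d=0$ (where Proposition~\ref{uc_sequence} does not apply) and the extraction of subsequences needed to make that proposition applicable in the Cauchy argument — correctly.
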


Since
the problem \eqref{value1} is one of finding an element of minimal norm 
over a (closed) subspace of the Banach space, 
it has a unique solution, and thus 
we obtain the following theorem.
\begin{theorem}\label{unique_solution}
If $X$ is a uniformly convex Banach space and $\phi \in X^*$ with 
$\phi \ne 0,$ then the problems \eqref{norm1} and \eqref{value1} both 
have a unique solution.
\end{theorem}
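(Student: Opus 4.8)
The plan is to reduce everything to Proposition \ref{cl_conv}. Problem \eqref{value1} is literally a minimal-norm problem over the set $S = \{y \in X : \phi(y) = 1\}$, so I would first check that $S$ is a nonempty, closed, convex subset of $X$. Nonemptiness follows from $\phi \ne 0$ (pick $z$ with $\phi(z) \ne 0$ and rescale by $1/\phi(z)$), closedness from the continuity of $\phi$, and convexity from the linearity of $\phi$. Proposition \ref{cl_conv} then yields a unique element $\vs{x} \in S$ of least norm, which is by definition the unique solution of \eqref{value1}.

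Next I would obtain \eqref{norm1} from \eqref{value1} via the correspondence already noted in the text. Since $\phi(\vs{x}) = 1$ we have $\vs{x} \ne 0$, so $\ns{x} := \vs{x}/\|\vs{x}\|$ is well defined, has unit norm, and solves \eqref{norm1}; this gives existence. For uniqueness, the key observation is that \emph{any} solution $\ns{x}$ of \eqref{norm1} automatically satisfies $\phi(\ns{x}) = \|\phi\|$: from $|\phi(\ns{x})| \le \|\phi\|\,\|\ns{x}\| = \|\phi\| = \Rp \phi(\ns{x}) \le |\phi(\ns{x})|$ one reads off that $\phi(\ns{x})$ is real and equal to $\|\phi\|$. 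Hence $\ns{x}/\phi(\ns{x}) = \ns{x}/\|\phi\|$ lies in $S$ and solves \eqref{value1}, so by the uniqueness already proved it equals $\vs{x}$ for every extremal $\ns{x}$; multiplying back by the fixed scalar $\|\phi\|$ shows $\ns{x}$ is unique.

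The only step that needs genuine care — and what I would flag as the main obstacle — is precisely this last transfer of uniqueness: a priori the map $\ns{x} \mapsto \ns{x}/\phi(\ns{x})$ sending solutions of \eqref{norm1} to solutions of \eqref{value1} has a denominator that could vary with $\ns{x}$, so one must first pin down that $\phi(\ns{x}) = \|\phi\|$ for all extremal $\ns{x}$ before concluding injectivity. Once that is in hand, the rest is the routine verification that $S$ is closed and convex together with the cited Proposition \ref{cl_conv}.
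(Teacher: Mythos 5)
Your proposal is correct and follows essentially the same route as the paper: apply Proposition \ref{cl_conv} to the closed convex set $\{y : \phi(y)=1\}$ to settle \eqref{value1}, then transfer to \eqref{norm1} via the correspondence $x \mapsto x/\phi(x)$. You supply a detail the paper leaves implicit --- that every solution of \eqref{norm1} satisfies $\phi(\ns{x}) = \|\phi\|$, so uniqueness genuinely transfers --- which is a worthwhile clarification but not a different method.
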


For later reference, we record the relations
\begin{equation}\label{eq_basics}
\begin{split}
\ns{x} &= \frac{\vs{x}}{ \|\vs{x}\|} = \| \phi \| \vs{x}, \\
\vs{x} &= \frac{\ns{x}}{\phi(\ns{x})} = \frac{\ns{x}}{\| \phi \|}, \\
\| \phi \| &= \phi(\ns{x}) = \frac{1}{\| \vs{x} \|}.\\
\end{split}
\end{equation}

We now state a lemma which will be applied repeatedly. 
\begin{lemma}\label{uc_sequence2}
Let $X$ be a uniformly convex Banach space, let 
$\{x_n\}$ and $\{y_n\}$ be sequences in $X$, and let 
$\phi \in X^*$ where $\phi \ne 0.$  If for some 
$d \ge 0,$ $\|x_n\| \rightarrow d$ and $\|y_n\| \rightarrow d$ as 
$n \rightarrow \infty,$ and if $|\phi(x_n + y_n)| \rightarrow 2d\|\phi\|$, 
then $\|x_n-y_n\| \rightarrow 0.$
\end{lemma}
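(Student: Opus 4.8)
The plan is to reduce everything to Proposition~\ref{uc_sequence}; the only real work is to upgrade the hypothesis $|\phi(x_n+y_n)| \to 2d\|\phi\|$ into the statement $\|x_n+y_n\| \to 2d$, after which that proposition applies verbatim.

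First I would separate out the degenerate case $d = 0$, since Proposition~\ref{uc_sequence} is stated only for $d > 0$. When $d = 0$ the conclusion is immediate from $\|x_n - y_n\| \le \|x_n\| + \|y_n\| \to 0$, so from then on I assume $d > 0$. In that case, for every $n$ I would write down the chain of inequalities
\[
\frac{|\phi(x_n+y_n)|}{\|\phi\|} \;\le\; \|x_n+y_n\| \;\le\; \|x_n\| + \|y_n\|,
\]
valid because $\phi \ne 0$ and $|\phi(z)| \le \|\phi\|\,\|z\|$ for all $z \in X$. By hypothesis the leftmost quantity tends to $2d$ and the rightmost quantity tends to $2d$, so the squeeze theorem forces $\|x_n+y_n\| \to 2d$. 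Combining this with $\|x_n\| \to d$ and $\|y_n\| \to d$, Proposition~\ref{uc_sequence} yields $\|x_n - y_n\| \to 0$, which is the claim.

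I do not anticipate a genuine obstacle: the lemma is essentially Proposition~\ref{uc_sequence} with the condition on $\|x_n+y_n\|$ replaced by a formally weaker one on $\phi(x_n+y_n)$, and the trivial bound $|\phi(z)| \le \|\phi\|\,\|z\|$ closes the gap. The one point that requires care is remembering to handle $d = 0$ before invoking Proposition~\ref{uc_sequence}, since that result presupposes $d > 0$.
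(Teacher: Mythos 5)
Your proof is correct and follows essentially the same route as the paper: squeeze $\|x_n+y_n\|$ between $|\phi(x_n+y_n)|/\|\phi\|$ and $\|x_n\|+\|y_n\|$, then invoke Proposition~\ref{uc_sequence}. Your explicit treatment of the case $d=0$ is a small point of extra care that the paper's proof silently omits, even though Proposition~\ref{uc_sequence} is stated only for $d>0$.
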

\begin{proof}
Since $|\phi(x_n + y_n)| \le \|\phi\| \|x_n + y_n\|$,
we have that
$$ \frac{|\phi(x_n + y_n)|}{\|\phi\|} \le \|x_n + y_n\| \le 
\|x_n\| + \|y_n\|.$$
But the left and right sides of this inequality both approach $2d$, 
Proposition \ref{uc_sequence} gives the result.
\end{proof} 

\section{$H^p$ and $A^p$ spaces}\label{section_hp_ap}
We now recall some basic facts about Hardy and Bergman spaces.  For proofs 
and further information, see \cite{D_Hp} and \cite{D_Ap}. 
Suppose that $f$ is analytic in the unit disc.  For $0 < p < \infty$ and 
$0 < r < 1,$ the integral mean is 
$$M_p(f,r)=
\bigg\{ \frac{1}{2\pi} \int_{0}^{2\pi} |f(re^{i\theta})|^p d\theta 
\bigg\}^{1/p}.$$
If $p=\infty$, we write
$$
M_\infty(f,r) = \sup_{\theta} |f(re^{i\theta})|.$$
For fixed $f$ and $p$, the integral means are increasing 
functions of $r.$  If $\sup_r M_p(f,r) < \infty,$ we say that $f$ is in 
the Hardy space $H^p.$  
For any function $f$ in $H^p,$ the radial limit 
$f(e^{i\theta}) = \lim_{r \rightarrow 1^-} f(re^{i\theta})$ 
exists for almost every $\theta.$   
 An $H^p$ function is uniquely determined by 
the values of its limit function on any set of positive measure.  
$H^p$ is a Banach space with norm 
$$\| f \|_{H^p} = \sup_r M_p(f,r) = \|f(e^{i\theta})\|_{L^p}.$$
Because of this and the fact that functions in $H^p$ are determined by 
their boundary values, we can consider $H^p$ as a subspace of 
$L^p(\T),$ where $\T$ denotes the unit circle.  
Thus, $H^p$ is uniformly convex for $1 < p < \infty$ and its dual space  
is isometrically isomorphic to $L^q/H^q,$ where $1/p + 1/q = 1$, with each 
equivalence class $[g] \in L^q/H^q$ representing the functional 
$$\phi(f) = \int_{\T} f(z) g(z) \, dz.$$

The Bergman space $A^p$ for $0 < p <\infty$ consists of all functions $f$
analytic in the unit disc $\D$ with 
$$ \| f\|_{A^p}^p = \int_{\D} |f(z)|^p\,d\sigma(z) < \infty,$$
where $\sigma$ is Lebesgue measure divided by $\pi.$ 
Because $A^p$ is a closed subspace 
of $L^p(\D)$, it is uniformly convex for $1 < p < \infty.$   
For $1 < p < \infty,$ the dual of $A^p$ is isomorphic to $A^q$, 
where $1/p + 1/q = 1,$ and the element $g\in A^q$ represents the functional 
defined by $\phi(f) = \int_{\D} f(z)\conj{g(z)}\,d\sigma(z)$. 
This isomorphism is not an isometry, but  
if the functional $\phi$ is represented by the 
function $g \in A^q$, then
\begin{equation}\label{A_q_isomorphism}
\| \phi \| \le \| g \| \le C_p \| \phi \|
\end{equation}
where $C_p$ is a constant depending only on $p$. 
We remark that $H^p$ is contained in $A^p$, and in fact 
$\|f\|_{A^p} \le \|f\|_{H^p}.$ 

The following theorem is essentially known 
(\textit{cf.} \cite{Shapiro_Approx}).
\begin{theorem}\label{integral_extremal_condition} 
Let $X$ be a (closed) subspace of $ A^p$ for $1 < p < \infty.$  Then 
a function $f \in X$ with $\|f\| = 1$ solves the extremal problem 
 \eqref{norm1} if and only if  
$$\int_{\D} h |f|^{p-1} \conj{\sgn f} \, d\sigma = 0,$$
for all $h \in X$ with $\phi(h) = 0.$  
More generally, for all $h\in X$,
$$\int_{\D} h |f|^{p-1} \conj{\sgn f}  \,  d\sigma 
= \frac{\phi(h)}{\| \phi \|}.$$
\end{theorem}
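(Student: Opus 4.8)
The plan is to reduce everything to the single analytic fact that on $A^p$, $1<p<\infty$, the functional $g\mapsto\|g\|_{A^p}^p=\int_{\D}|g|^p\,d\sigma$ is Gâteaux differentiable with
\[
\frac{d}{dt}\Big|_{t=0}\int_{\D}|g+th|^p\,d\sigma=p\,\Rp\!\int_{\D}h\,|g|^{p-1}\conj{\sgn g}\,d\sigma\qquad(g,h\in A^p,\ t\in\mathbb{R}).
\]
Pointwise one has $\frac{d}{dt}\big|_{t=0}|g+th|^p=p\,|g|^{p-2}\Rp(\conj g\,h)=p\,\Rp\big(h\,|g|^{p-1}\conj{\sgn g}\big)$, with the integrand understood to be $0$ on the (null) zero set of $g$; to pass the derivative through the integral one dominates the difference quotients, using $\big||g+th|^p-|g|^p\big|\le p|t|\,|h|(|g|+|h|)^{p-1}$ for $|t|\le 1$ together with Young's inequality, and then applies dominated convergence. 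I would record this as a short preliminary lemma, or simply cite it, as it is the computation underlying \cite{Shapiro_Approx}; see also \cite{D_Ap}.

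\emph{Necessity.} Suppose $\|f\|=1$ and $f$ solves \eqref{norm1}. From $\Rp\phi(f)=\|\phi\|\ge|\phi(f)|\ge\Rp\phi(f)$ we get $\phi(f)=\|\phi\|>0$. Fix $h\in X$ with $\phi(h)=0$. For real $t$, $\Rp\phi(f+th)=\|\phi\|$, so for small $|t|$ the unit vector $(f+th)/\|f+th\|$ has $\Rp\phi\big((f+th)/\|f+th\|\big)=\|\phi\|/\|f+th\|\le\|\phi\|$; hence $\|f+th\|\ge 1=\|f\|$ and $t\mapsto\|f+th\|_{A^p}^p$ attains its minimum at $t=0$. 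Differentiating gives $\Rp\int_{\D}h\,|f|^{p-1}\conj{\sgn f}\,d\sigma=0$, and running the same argument with $ih$ (also in $\ker\phi$) in place of $h$ kills the imaginary part, so $\int_{\D}h\,|f|^{p-1}\conj{\sgn f}\,d\sigma=0$.

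\emph{Sufficiency and the general formula.} Suppose now $\|f\|=1$ and $\int_{\D}h\,|f|^{p-1}\conj{\sgn f}\,d\sigma=0$ for all $h\in X$ with $\phi(h)=0$. First, $\phi(f)\ne 0$: otherwise $h=f$ would be admissible, yet $\int_{\D}f\,|f|^{p-1}\conj{\sgn f}\,d\sigma=\int_{\D}|f|^p\,d\sigma=1$. Now $\psi(g):=\|g\|_{A^p}^p$ is convex on $X$ (viewed as a real vector space) and, by the formula above, differentiable with real derivative $D\psi(f)[v]=p\,\Rp\int_{\D}v\,|f|^{p-1}\conj{\sgn f}\,d\sigma$. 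If $g\in X$ with $\phi(g)=\phi(f)$, then $v=g-f\in\ker\phi$, so $D\psi(f)[v]=0$ and convexity yields $\psi(g)\ge\psi(f)+D\psi(f)[v]=\psi(f)$; thus $f$ has least norm among $\{g\in X:\phi(g)=\phi(f)\}$. Applying this to $g=\tfrac{\phi(f)}{\phi(y)}y$ for a unit vector $y\in X$ with $\phi(y)\ne0$ gives $1\le|\phi(f)|/|\phi(y)|$, hence $|\phi(y)|\le|\phi(f)|$ and therefore $\|\phi\|=|\phi(f)|$. Since the hypotheses are unchanged if $f$ is multiplied by a unimodular constant, we may take $\phi(f)=|\phi(f)|=\|\phi\|$, and then $\Rp\phi(f)=\|\phi\|$ says that $f$ solves \eqref{norm1}. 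Finally, for arbitrary $h\in X$ write $h=\tfrac{\phi(h)}{\|\phi\|}f+h_0$ with $h_0=h-\tfrac{\phi(h)}{\|\phi\|}f\in\ker\phi$; applying $\int_{\D}(\cdot)\,|f|^{p-1}\conj{\sgn f}\,d\sigma$ and using the orthogonality together with $\int_{\D}f\,|f|^{p-1}\conj{\sgn f}\,d\sigma=\|f\|^p=1$ gives $\int_{\D}h\,|f|^{p-1}\conj{\sgn f}\,d\sigma=\phi(h)/\|\phi\|$.

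\emph{The main point.} Apart from the differentiation formula, the argument is just convexity and elementary linear algebra. The one step needing care is that formula when $1<p<2$, where $s\mapsto|s|^p$ fails to be twice differentiable; the domination of the difference quotients near the zeros of $g$ is precisely what the Young-inequality bound above handles, and one may alternatively just quote the known computation.
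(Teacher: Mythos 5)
Your proof is correct, and it is worth noting that the paper itself offers no proof of this theorem at all---it is dismissed as ``essentially known'' with a citation to Shapiro's book---so there is nothing internal to compare against; your variational argument (Gateaux differentiability of $g\mapsto\|g\|_{A^p}^p$ with the dominated-convergence justification of differentiating under the integral, a first-order condition for necessity, and convexity plus the gradient inequality for sufficiency) is the standard route and is carried out soundly, including the correct treatment of the zero set of $g$ and the $ih$ trick to kill the imaginary part. One point deserves emphasis, and your ``multiply by a unimodular constant'' step is where it is hiding: the orthogonality condition $\int_{\D}h|f|^{p-1}\conj{\sgn f}\,d\sigma=0$ for $h\in\ker\phi$ is invariant under $f\mapsto\lambda f$ with $|\lambda|=1$, whereas solving \eqref{norm1} is not, so what your sufficiency argument literally establishes is that $|\phi(f)|=\|\phi\|$, i.e.\ that some unimodular rotation of $f$ is the extremal element. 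The ``if'' direction of the theorem as stated is therefore true only with the additional normalization $\phi(f)>0$, which is exactly what the ``more generally'' identity encodes (take $h=f$ to get $\phi(f)=\|\phi\|$). This is an imprecision in the statement rather than a flaw in your reasoning, but in a final write-up you should say explicitly that the equivalence holds modulo this normalization rather than ``we may take,'' since replacing $f$ by $\lambda f$ changes the function whose extremality is being asserted. Your derivation of the general formula from the decomposition $h=\tfrac{\phi(h)}{\|\phi\|}f+h_0$ is correct as written.
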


Extremal problems for $H^p$ are well understood, due to the successful 
use of ``dual extremal problems'' (see \cite{D_Hp}, Chapter 8), 
but the method does not work as well for $A^p$
(see \cite{Dragan}).  

\section{Continuous dependence of the solution on the functional}

It is important to know whether the extremal element depends continuously 
on the functional.  This turns out to be true for uniformly convex spaces.  
\begin{theorem}\label{solution_continuity}
Suppose that $X$ is a uniformly convex Banach space and that 
$\{\phi_n\}$ is a sequence of nonzero functionals in $X^*$ such that 
$\phi_n \rightarrow \phi \ne 0.$  Let $\ns{x_n}$ denote the solution 
to problem \eqref{norm1} for $\phi_n,$ and let $\ns{x}$ be the solution 
for $\phi.$  Similarly, let $\vs{x_n}$ denote the solution 
to problem \eqref{value1} for $\phi_n,$ and let $\vs{x}$ be the solution 
for $\phi.$  Then $\ns{x_n} \rightarrow \ns{x}$ and 
$\vs{x_n} \rightarrow \vs{x}.$  
\end{theorem}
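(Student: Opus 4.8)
The plan is to reduce everything to a single application of Lemma~\ref{uc_sequence2}, applied to the sequence $\{\ns{x_n}\}$ together with the constant sequence $y_n \equiv \ns{x}$. Once $\ns{x_n} \to \ns{x}$ has been established, the convergence $\vs{x_n} \to \vs{x}$ follows immediately from the scaling relations \eqref{eq_basics}, so in essence there is only one thing to prove.

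First I would record the elementary fact that $\|\phi_n\| \to \|\phi\|$, which is immediate from $\bigl|\,\|\phi_n\| - \|\phi\|\,\bigr| \le \|\phi_n - \phi\| \to 0$; in particular $\|\phi\| > 0$ since $\phi \neq 0$. Next, observe that an extremal element satisfies not merely $\Rp \phi_n(\ns{x_n}) = \|\phi_n\|$ but in fact $\phi_n(\ns{x_n}) = \|\phi_n\|$, because $|\phi_n(\ns{x_n})| \le \|\phi_n\|\,\|\ns{x_n}\| = \|\phi_n\|$ forces the value to equal its modulus. Writing
\[
\phi(\ns{x_n}) = \phi_n(\ns{x_n}) + \bigl(\phi - \phi_n\bigr)(\ns{x_n}) = \|\phi_n\| + \bigl(\phi - \phi_n\bigr)(\ns{x_n}),
\]
and bounding the last term by $\|\phi - \phi_n\|\,\|\ns{x_n}\| = \|\phi - \phi_n\| \to 0$, we obtain $\phi(\ns{x_n}) \to \|\phi\|$ as complex numbers. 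Since $\phi(\ns{x}) = \|\phi\|$ as well, adding gives $\phi(\ns{x_n} + \ns{x}) \to 2\|\phi\|$, hence $|\phi(\ns{x_n} + \ns{x})| \to 2\|\phi\|$. Now apply Lemma~\ref{uc_sequence2} to the sequence $\{\ns{x_n}\}$, the constant sequence $y_n \equiv \ns{x}$, and $d = 1$: the hypotheses $\|\ns{x_n}\| \to 1$ and $\|\ns{x}\| \to 1$ hold trivially (both norms equal $1$), and the limit just computed is exactly $|\phi(\ns{x_n} + \ns{x})| \to 2d\|\phi\|$. The lemma yields $\|\ns{x_n} - \ns{x}\| \to 0$.

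Finally, by \eqref{eq_basics} we have $\vs{x_n} = \ns{x_n}/\|\phi_n\|$ and $\vs{x} = \ns{x}/\|\phi\|$; since $\|\phi_n\| \to \|\phi\| \neq 0$ and $\ns{x_n} \to \ns{x}$, it follows that $\vs{x_n} \to \vs{x}$. The only point requiring a little care is that $\phi(\ns{x_n})$ is a priori a complex number, so one genuinely needs convergence of $\phi(\ns{x_n})$ as a complex number — not merely of its real part — in order to conclude $|\phi(\ns{x_n} + \ns{x})| \to 2\|\phi\|$; this is precisely why it is worth noting that $\phi_n(\ns{x_n})$ is real and nonnegative. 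Beyond that, Lemma~\ref{uc_sequence2} does all the work, so I do not anticipate any serious obstacle.
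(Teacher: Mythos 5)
Your proposal is correct and follows essentially the same route as the paper: the same decomposition $\phi(\ns{x_n}) = \|\phi_n\| + (\phi-\phi_n)(\ns{x_n})$, the same application of Lemma~\ref{uc_sequence2} with the constant sequence $\ns{x}$ and $d=1$, and the same use of \eqref{eq_basics} to pass from $\ns{x_n}$ to $\vs{x_n}$. Your explicit remark that $\phi_n(\ns{x_n})$ equals $\|\phi_n\|$ as a complex number (so that one gets genuine convergence, not just of real parts) is a detail the paper takes for granted via \eqref{eq_basics}, but it is the same argument.
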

 Ryabykh \cite{Ryabykh} gives a different proof of this 
statement while establishing another theorem, 
but our proof is simpler and more direct.  
\begin{proof}
Note that 
$$ \phi(\ns{x_n}) = \phi_n(\ns{x_n}) + (\phi - \phi_n)(\ns{x_n}) = 
\|\phi_n\| + (\phi - \phi_n)(\ns{x_n}) \rightarrow \|\phi\|$$
so that 
$$\phi(\ns{x_n} + \ns{x}) \rightarrow 2 \|\phi\|.$$ \
Lemma \ref{uc_sequence2} now shows that $\ns{x_n} \rightarrow \ns{x}$.  
It follows that $\vs{x_n} \rightarrow \vs{x}$ since 
$$\vs{x_n} = \frac{\ns{x_n}}{\| \phi_n \|} \rightarrow 
\frac{\ns{x}}{\| \phi \|} = \vs{x}.$$
\end{proof}

For given $\phi,$ a unique $\vs{x}$ solves the problem 
\eqref{value1}:
$$ \| \vs{x} \| = \min_{\phi(x) = 1} \|x\|.$$
It is natural to ask whether different functionals can give rise to the same 
solution of the problem \eqref{value1}.  
(Clearly, any two functionals that are positive real   
multiples of each other will have the same extremal element solving 
\eqref{norm1}).  
The following theorem answers 
this question when $X^*$ is uniformly convex. 

\begin{theorem}\label{dual_uc}
Let $X$ be a Banach space and let $x \in X$ with $x\ne 0.$  
If $X^*$ is uniformly convex, 
then there exists a unique $\phi \in X^*$ such that x solves the problem 
\eqref{value1} associated with $\phi.$  
\end{theorem}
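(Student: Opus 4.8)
The plan is to treat existence and uniqueness separately; only uniqueness uses that $X^*$ is uniformly convex.

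For existence, by the Hahn--Banach theorem there is a norm-one functional $\psi \in X^*$ with $\psi(x) = \|x\|$ (a supporting functional at $x/\|x\|$). Setting $\phi = \psi/\|x\|$ gives $\phi(x) = 1$ and $\|\phi\| = 1/\|x\|$; in particular $\phi \ne 0$. For any $y \in X$ with $\phi(y) = 1$ we then have $1 = |\phi(y)| \le \|\phi\|\,\|y\| = \|y\|/\|x\|$, hence $\|y\| \ge \|x\|$. Since $\phi(x) = 1$, this shows $\|x\| = \inf_{\phi(y)=1}\|y\|$, i.e.\ $x$ solves \eqref{value1} for $\phi$. (This part needs neither uniform convexity of $X^*$ nor of $X$.)

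For uniqueness, suppose $\phi_1, \phi_2 \in X^*$ each have $x$ as their solution of \eqref{value1}. Put $d = 1/\|x\| > 0$. Applying the relations \eqref{eq_basics} to each $\phi_i$ (with $x = \vs{x}$) gives $\|\phi_i\| = 1/\|\vs{x}\| = 1/\|x\| = d$, while by definition $\phi_1(x) = \phi_2(x) = 1$. Then $(\phi_1 + \phi_2)(x) = 2$, so on one hand $\|\phi_1 + \phi_2\| \ge |(\phi_1 + \phi_2)(x)|/\|x\| = 2d$, and on the other hand $\|\phi_1 + \phi_2\| \le \|\phi_1\| + \|\phi_2\| = 2d$. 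Hence $\|\phi_1 + \phi_2\| = 2d$, that is, $\|\phi_1/d + \phi_2/d\| = 2$ with $\|\phi_1/d\| = \|\phi_2/d\| = 1$. Uniform convexity of $X^*$ — applied directly from the definition, or via Proposition \ref{uc_sequence} with constant sequences — then forces $\|\phi_1/d - \phi_2/d\| = 0$, so $\phi_1 = \phi_2$.

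The argument is short and I do not anticipate any genuine obstacle; the point requiring care is extracting the normalization $\|\phi\| = 1/\|x\|$ from \eqref{eq_basics}, since it is exactly this that turns $\phi_1 + \phi_2$ into the midpoint configuration to which dual uniform convexity applies. Conceptually, the pairing against the fixed element $x$ here plays the role that the fixed functional $\phi$ played in Proposition \ref{uc_sequence} and Lemma \ref{uc_sequence2}, with the roles of $X$ and $X^*$ interchanged.
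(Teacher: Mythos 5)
Your proof is correct. The existence half is identical to the paper's: both produce $\phi$ with $\phi(x)=1$ and $\|\phi\|=1/\|x\|$ by Hahn--Banach and observe that any $y$ with $\phi(y)=1$ satisfies $\|y\|\ge\|x\|$. The uniqueness half takes a genuinely different (and more self-contained) route. The paper observes that any $\theta$ for which $x$ solves \eqref{value1} must solve the dual extremal problem \eqref{eq_dualproblem} of minimizing $\|\psi\|$ subject to $\psi(x)=1$, and then invokes Theorem \ref{unique_solution} (ultimately Proposition \ref{cl_conv}) in the uniformly convex space $X^*$ to conclude that this problem has only one solution. You instead take two candidates $\phi_1,\phi_2$, use the relation $\|\phi_i\|=1/\|x\|$ from \eqref{eq_basics} together with $\phi_i(x)=1$ to force $\|\phi_1+\phi_2\|=2/\|x\|$, and apply the definition of uniform convexity directly to the normalized midpoint. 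This is essentially the uniqueness portion of Proposition \ref{cl_conv} unpacked and specialized to the affine set $\{\psi:\psi(x)=1\}$; it is more elementary in that it never needs the existence assertion of that proposition. What the paper's formulation buys is the explicit identification of $\phi$ as the solution of the extremal problem \eqref{eq_dualproblem}, which is then reused in the proof of Theorem \ref{functional_continuity}(b): there, continuity of $\phi$ as a function of $x$ follows by applying Theorem \ref{solution_continuity} to the dual problem, a step your direct argument does not set up. One small point of care: \eqref{eq_basics} is recorded under the standing assumption that $X$ is uniformly convex, whereas here $X$ is a general Banach space; the relation $\|\phi_i\|=1/\|x\|$ still holds for any solution of \eqref{value1} (if $\|\phi_i\|>1/\|x\|$ one easily produces $y$ with $\phi_i(y)=1$ and $\|y\|<\|x\|$), and the paper itself uses the same relation in this setting, but it is worth a sentence of justification rather than a bare citation.
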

\begin{proof}
By the Hahn-Banach theorem, there is some $\phi \in X^*$ such that 
$\phi(x) = 1$ and $\| \phi \| = \frac{1}{\|x\|}.$  But if 
for some $y\in X,$ $\phi(y) = 1,$ 
then $1 \le \| \phi \| \| y \|,$ or
$\|y\| \ge \| \phi \|^{-1} = \|x\|$. This says that 
$x$ solves the problem \eqref{value1} associated with $\phi.$ 
To show that $\phi$ is unique, 
consider the problem
of finding $\vs{\psi}$ such that 
\begin{equation} \label{eq_dualproblem}
 \| \vs{\psi} \| = \min_{\psi \in X^*, \psi(x)=1} \| \psi \|
\end{equation}
We claim that if $x$ solves the problem \eqref{value1} for some 
$\theta \in X^*,$ then $\theta$ solves the problem \eqref{eq_dualproblem}. 
In particular, $\phi$ solves the problem \eqref{eq_dualproblem}. 
To see this, note that if $x$ solves \eqref{value1} for $\theta,$ then  
$\theta(x) = 1.$  If $\theta$ is not a solution of \eqref{eq_dualproblem}, 
then there is a 
functional $\psi$ such that $\| \psi \| < \| \theta \|$ and 
$\psi(x) = 1.$  But this is impossible, since it would imply  
$$1 = |\psi(x)| \le \| \psi \| \| x \| 
= \frac{\| \psi \|}{\| \theta \|} < 1,$$
where we have used last relation in \eqref{eq_basics}.   
Since $X^*$ is uniformly convex, 
Theorem \ref{unique_solution} shows that 
$\phi$ is the unique solution to \eqref{eq_dualproblem}, 
which proves the theorem.
\end{proof}

When $\vs{x}$ determines the functional $\phi$ 
uniquely, it is also natural to ask whether $\phi$ depends continuously 
on $\vs{x}.$ The following theorem answers this question when 
$X^*$ is uniformly convex.

\begin{theorem}\label{functional_continuity}
(a) Suppose that $X$ is a Banach space whose dual space $X^*$ is uniformly
 convex.  If $S$ is a closed subspace of $X$, then for any $x \in S$, 
there exists a unique $\phi \in S^*$ such that x solves the problem 
\eqref{value1} associated with $\phi$ over $S$.  

(b) Moreover, if $x_n \in S$ and $x_n \rightarrow x,$ and 
$\phi_n$ is the unique functional in $S^*$ that solves the 
problem \eqref{value1} for $x_n,$ then $\phi_n \rightarrow \phi.$ 
\end{theorem}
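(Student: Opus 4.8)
The plan is to reduce both parts to results already established, applied with the Banach space $S$ playing the role of $X$; the only step with real content will be checking that $S^*$ is again uniformly convex. Throughout we take $x\ne0$, since no functional satisfies $\phi(0)=1$.

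First I would record that $S^*$ is uniformly convex. The restriction map $\phi\mapsto\phi|_S$ sends $X^*$ onto $S^*$ (Hahn--Banach) with kernel $S^\perp=\{\phi\in X^*:\phi|_S=0\}$, and it induces an isometric isomorphism $X^*/S^\perp\cong S^*$; so it suffices to see that a quotient $Y/M$ of a uniformly convex space by a closed subspace is uniformly convex. For that I would take $u_n,v_n\in Y/M$ with $\|u_n\|=\|v_n\|=1$ and $\|u_n+v_n\|\to2$, and lift them, via the quotient map $q$, to $y_n,z_n\in Y$ with $q(y_n)=u_n$, $q(z_n)=v_n$, and $\|y_n\|,\|z_n\|\le1+\tfrac1n$. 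Then $\|y_n\|\to1$, $\|z_n\|\to1$, and
$$\|u_n+v_n\|=\|q(y_n+z_n)\|\le\|y_n+z_n\|\le\|y_n\|+\|z_n\|,$$
so $\|y_n+z_n\|\to2$ by squeezing; Proposition \ref{uc_sequence} gives $\|y_n-z_n\|\to0$, and hence $\|u_n-v_n\|\le\|y_n-z_n\|\to0$. Thus $Y/M$, and in particular $S^*$, is uniformly convex.

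With that in hand, part (a) is immediate: $S$ is a Banach space with uniformly convex dual, so Theorem \ref{dual_uc} applied to $S$ furnishes, for each nonzero $x\in S$, a unique $\phi\in S^*$ for which $x$ solves \eqref{value1} over $S$. For part (b) I would imitate the proof of Theorem \ref{solution_continuity}, but carried out inside $S^*$, using as test functional the canonical image $\hat x\in S^{**}$ of $x$ (so $\hat x(\psi)=\psi(x)$ and $\|\hat x\|=\|x\|$). Put $d=\|\phi\|_{S^*}$. Since $x_n$ solves \eqref{value1} for $\phi_n$ over $S$ and $x$ solves it for $\phi$, the last relation of \eqref{eq_basics} read in $S$ gives $\|\phi_n\|=1/\|x_n\|\to1/\|x\|=\|\phi\|=d$ and $\|\hat x\|=\|x\|=1/d$. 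Using $\phi_n(x_n)=1=\phi(x)$ we get
$$\hat x(\phi_n+\phi)=\phi_n(x)+1=\phi_n(x_n)+\phi_n(x-x_n)+1=2+\phi_n(x-x_n),$$
and $|\phi_n(x-x_n)|\le\|\phi_n\|\,\|x-x_n\|\to0$, so $|\hat x(\phi_n+\phi)|\to2=2d\|\hat x\|$. Applying Lemma \ref{uc_sequence2} in $S^*$---with functional $\hat x\in(S^*)^*$, the sequence $\{\phi_n\}$, the constant sequence $\{\phi\}$, and the constant $d$---then yields $\|\phi_n-\phi\|_{S^*}\to0$.

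I expect the uniform convexity of $S^*$ to be the only step that is not pure bookkeeping, and even there Proposition \ref{uc_sequence} does the work; the main pitfall in the rest is simply keeping straight which space each norm and each functional lives in. (Alternatively, part (b) can be deduced directly from Theorem \ref{solution_continuity} applied in $S^*$, since the proof of Theorem \ref{dual_uc} already identifies $\phi$ as the solution of \eqref{value1} in $S^*$ for the functional $\hat x$, with $\widehat{x_n}\to\hat x$ because the canonical embedding is isometric; but the computation above is shorter.)
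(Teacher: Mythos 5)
Your proof is correct and follows essentially the same route as the paper: identify $S^*$ with the uniformly convex quotient $X^*/S^\perp$, get (a) from Theorem \ref{dual_uc}, and get (b) by viewing $\phi_n$ as the solution of the dual minimization problem \eqref{eq_dualproblem} for $\widehat{x_n}$ and invoking continuity. The only differences are cosmetic: you prove the quotient-of-uniformly-convex fact directly via Proposition \ref{uc_sequence} where the paper cites K\"othe, and your explicit Lemma \ref{uc_sequence2} computation for (b) is just the proof of Theorem \ref{solution_continuity} unrolled in $S^*$ (the alternative you mention in your closing parenthesis is exactly the paper's argument).
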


\begin{proof}
Recall that if $S$ is a closed subspace of $X,$ then 
$S^*$ is isometrically isomorphic to $X^*/S^\perp$, where $S^\perp$ is the annihilator of 
$S$ in $X^*$.  In \cite{Kothe}, Section 26, it is shown that 
the quotient space of a uniformly convex space is uniformly convex, which 
shows that $S^*$ is uniformly convex. 
From this and Theorem \ref{dual_uc}, part (a) follows.

Since, as shown in the proof of Theorem 
\ref{dual_uc}, each $\phi_n$ is the unique solution to the problem 
\eqref{eq_dualproblem} with $x_n$ in place of $x$, and since 
$\phi$ is the unique solution of the problem \eqref{eq_dualproblem}, 
Theorem \ref{solution_continuity} implies part (b).
   
\end{proof}

Since $(L^p)^* = L^q$ is uniformly convex for $1 < p < \infty$, 
this theorem applies to the spaces $A^p$ and $H^p$ for 
$1 < p < \infty.$ 

\section{Approximation by solutions in subspaces and Ryabykh's theorem}

The preceding results will now be applied to give a streamlined 
proof of a theorem of Ryabykh \cite{Ryabykh}, as indicated in the 
introduction.  For this purpose it will be helpful to apply the 
following theorem, which allows an extremal element to be approximated 
by extremal elements over subspaces. 

\begin{theorem}\label{subspace_approx}
Suppose that $X$ is a uniformly convex Banach space and let 
$X_1$, $ X_2$, $\cdots$ be (closed) subspaces for which  
$X_1 \subset X_2 \subset \cdots \subset X$ and 
$$\overline{\bigcup_{n \in \N} X_n} = X.$$
Let $\phi \in X^*$, and let 
$$\| \phi \|_n= \sup_{x \in X_n, \|x\|=1} |\phi(x)|.$$  Let 
$\ns{x}_n$ denote the solution to the problem \eqref{norm1} when restricted 
to the subspace $X_n,$ and let $\vs{x}_n$ denote the 
solution to the problem \eqref{value1} when restricted to $X_n.$  
Then $\| \phi \|_n \rightarrow \| \phi \|,$ 
$\ns{x}_n \rightarrow \ns{x}$, and $\vs{x}_n \rightarrow \vs{x}$ as 
$n\rightarrow \infty.$
\end{theorem}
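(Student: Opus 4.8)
The plan is to reduce everything to the convergence $\|\phi\|_n \to \|\phi\|$, and then feed that into Lemma \ref{uc_sequence2} exactly as in the proof of Theorem \ref{solution_continuity}. First I would establish $\|\phi\|_n \to \|\phi\|$. The sequence $\|\phi\|_n$ is nondecreasing (since $X_n \subset X_{n+1}$) and bounded above by $\|\phi\|$, so it converges to some limit $L \le \|\phi\|$. For the reverse inequality, fix $\varepsilon > 0$ and pick $y \in X$ with $\|y\| = 1$ and $|\phi(y)| > \|\phi\| - \varepsilon$; since $\overline{\bigcup_n X_n} = X$, there is some $y_n \in X_n$ with $\|y - y_n\|$ small, hence (after normalizing) an element of $X_n$ of norm $1$ on which $\phi$ is close to $\phi(y)$. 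This forces $L \ge \|\phi\| - \varepsilon$ for every $\varepsilon$, so $L = \|\phi\|$. (One should note $\|\phi\|_n > 0$ for $n$ large, so the restricted extremal problems make sense; this follows once $\|\phi\|_n$ is close to $\|\phi\| \ne 0$.)

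Next I would prove $\ns{x}_n \to \ns{x}$. Here $\ns{x}_n \in X_n \subset X$ has $\|\ns{x}_n\| = 1$, and by the extremal property over $X_n$ together with the definition of $\|\phi\|_n$, we have $|\phi(\ns{x}_n)| = \|\phi\|_n \to \|\phi\|$. Since $\phi$ has a complex phase, replace $\ns{x}_n$ by a unimodular multiple so that $\phi(\ns{x}_n) = \|\phi\|_n$ is real and positive; this does not change the norm and does not affect the conclusion $\|\ns{x}_n - \ns{x}\| \to 0$ (and in fact the genuine extremal element of problem \eqref{norm1} over $X_n$ already satisfies $\phi(\ns{x}_n) = \|\phi\|_n$ by \eqref{eq_basics} applied within $X_n$, so no adjustment is needed). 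Then
$$\phi(\ns{x}_n + \ns{x}) = \phi(\ns{x}_n) + \phi(\ns{x}) \to \|\phi\| + \|\phi\| = 2\|\phi\|,$$
while $\|\ns{x}_n\| \to 1$ and $\|\ns{x}\| = 1$. Applying Lemma \ref{uc_sequence2} with $d = 1$ gives $\|\ns{x}_n - \ns{x}\| \to 0$.

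Finally, $\vs{x}_n \to \vs{x}$ follows from the relations \eqref{eq_basics}: working over $X_n$, $\vs{x}_n = \ns{x}_n / \|\phi\|_n$, and since $\ns{x}_n \to \ns{x}$ and $\|\phi\|_n \to \|\phi\| \ne 0$, we get $\vs{x}_n \to \ns{x}/\|\phi\| = \vs{x}$. The main obstacle is the first step, the convergence $\|\phi\|_n \to \|\phi\|$: everything downstream is a near-verbatim repetition of the argument in Theorem \ref{solution_continuity}, but that norm convergence is precisely where the density hypothesis $\overline{\bigcup_n X_n} = X$ must be used, and it must be used carefully because $\phi$ restricted to $X_n$ need not attain its norm on the same vectors that nearly attain $\|\phi\|$ on $X$ — the density/normalization estimate has to be done by hand.
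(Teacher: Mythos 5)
Your proposal is correct and follows essentially the same route as the paper: establish $\|\phi\|_n \to \|\phi\|$ by approximating a near-maximizer (the paper uses $\ns{x}$ itself) with elements of the $X_n$ via the density hypothesis, then feed $\phi(\ns{x}_n + \ns{x}) \to 2\|\phi\|$ into Lemma \ref{uc_sequence2}, and finish with $\vs{x}_n = \ns{x}_n/\|\phi\|_n$. The minor variations (monotonicity of $\|\phi\|_n$ versus the paper's $\limsup/\liminf$ bookkeeping, and approximating an arbitrary near-maximizer rather than $\ns{x}$) are immaterial.
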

Here, $\ns{x}$ denotes the solution to \eqref{norm1} over  $X$, 
and $\vs{x}$ denotes the solution to \eqref{value1} over $X$.
\begin{proof}
First of all, we know that each $\ns{x}_n$ and $\vs{x}_n$ is uniquely 
determined since 
a closed subspace of a uniformly convex space is uniformly convex.  Let 
$\varepsilon > 0$ be given.  Since $\cup_{n\in N} X_n$ is dense in $X$, 
we may choose an $n$ such that 
$\| \ns{x} - y \| < \varepsilon$ for some $y \in X_n.$  
Thus 
\bes \begin{split} |\phi(y)| &= | \phi(\ns{x}) - \phi(\ns{x} - y)| 
\ge | \phi(\ns{x}) | - | \phi(\ns{x} - y) | 
= \| \phi \| - | \phi(\ns{x} - y) | \\
&\ge \|\phi \| - \| \phi \|\, \|\ns{x} - y \| \ge \| \phi \|(1 - \varepsilon).
\end{split} \ees
We also know that $\|y \| \le 1 + \varepsilon,$ so
$$\| \phi \|_n \ge \frac{|\phi(y)|}{\|y\|} \ge 
\frac{(1 - \varepsilon)\| \phi \|}{1 + \varepsilon},$$ 
and thus for all $N \ge n,$ $$ \| \phi \|_N \ge 
\frac{(1 - \varepsilon)\| \phi \|}{1 + \varepsilon}.$$ 
But since $\| \phi \| \ge \| \phi \|_m$ for all $m,$ this implies that 
$$ \| \phi\| \ge \limsup_{m \rightarrow \infty} \| \phi \|_m 
 \ge \liminf_{m \rightarrow \infty} \| \phi \|_m \ge 
\frac{(1 - \varepsilon)\| \phi \|}{1 + \varepsilon}.$$
Because 
$\varepsilon$ was arbitrary,  this shows that 
$\| \phi \|_m \rightarrow \|\phi\|.$ 

Now, $\phi(\ns{x_n} + \ns{x}) = \| \phi \|_n + \| \phi \| \rightarrow 
2 \| \phi \|,$ so Lemma \ref{uc_sequence2} 
shows that $\|\ns{x}-\ns{x}_n\|\rightarrow 0$. 
For $\vs{x},$ the result now follows since 
$$ \vs{x}_n = \frac{\ns{x}}{\| \phi \|_n}\qquad \text{ and } \qquad   
 \vs{x} = \frac{\ns{x}}{\| \phi \|}.$$
\end{proof}

With the help of the preceding results, we can now obtain a slightly sharpened version 
of Ryabykh's theorem.  Our proof adapts some of Ryabykh's ideas but is simpler 
and more concise.  
 We note that Ryabykh's approach successfully applies to other extremal 
problems as well.  For example, it applies to some problems of best approximation of a given function by harmonic or analytic functions 
(see \cite{Khavinson_McCarthy_Shapiro}), and to some 
nonlinear extremal problems involving nonvanishing functions 
(see \cite{Khavinson_nonvanishing}).  
\begin{theorem}\label{ryabykh_thm} 
Let $1 < p < \infty$ and let $1/p + 1/q = 1.$  Suppose that 
$\phi \in (A^p)^*$ and $\phi(f) = \int_{\D} f \conj{g} \, d\sigma$ 
for some $g \in H^q,$ where $g \ne 0.$  
Then the solution to the extremal problem \eqref{norm1} (with  
$X = A^p$) belongs to 
$H^p$ and satisfies 
\begin{equation}\label{ryabkh_estimate}
\|\ns{f}\|_{H^p} \le \Bigg\{ \bigg[ \max(p-1,1)
\bigg]\frac{C_p\|g\|_{H^q}}{\| g \|_{A^q}}\Bigg\}^{q/p},
\end{equation}
where $C_p$ is the constant in \eqref{A_q_isomorphism}.
\end{theorem}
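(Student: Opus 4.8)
The plan is to combine the subspace-approximation theorem (Theorem \ref{subspace_approx}) with the $H^p$ theory of extremal problems, which is far better understood than the $A^p$ theory. The natural choice of subspaces is $X_n = \mathcal{P}_n$, the space of polynomials of degree at most $n$, since $\bigcup_n \mathcal{P}_n$ is dense in $A^p$ and each $\mathcal{P}_n$ is finite-dimensional (so the extremal element over $\mathcal{P}_n$ is a polynomial, hence trivially in $H^p$). By Theorem \ref{subspace_approx}, $\ns{f}_n \to \ns{f}$ in $A^p$. The strategy is therefore: (i) obtain a \emph{uniform} bound $\|\ns{f}_n\|_{H^p} \le M$ independent of $n$, for the constant $M$ appearing on the right side of \eqref{ryabkh_estimate}; (ii) conclude that $\ns{f} \in H^p$ with $\|\ns{f}\|_{H^p} \le M$ by a normal-families / weak-compactness argument, using that a bounded sequence in $H^p$ (for $1<p<\infty$) has a subsequence converging weakly, and that $A^p$-convergence pins down the limit.

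**For step (i)**, the key is to bring the hypothesis $g \in H^q$ into play through the extremal condition. Over the subspace $\mathcal{P}_n$, Theorem \ref{integral_extremal_condition} gives that the extremal polynomial $f_n := \ns{f}_n$ satisfies
\begin{equation*}
\int_{\D} h\, |f_n|^{p-1} \conj{\sgn f_n}\, d\sigma = \frac{\phi(h)}{\|\phi\|_n}
= \frac{1}{\|\phi\|_n}\int_{\D} h \conj{g}\, d\sigma
\end{equation*}
for every polynomial $h$ of degree $\le n$. The idea, following Ryabykh, is to interpret $|f_n|^{p-1}\sgn f_n$ as (close to) the boundary values of an $H^q$ function related to $g$: one writes $|f_n|^{p-1}\conj{\sgn f_n} = \conj{f_n}^{\,p-1}/\text{(something)}$ — more precisely, since $f_n$ is a polynomial, $|f_n|^{p} = f_n \conj{f_n}$, and the function $F_n$ defined so that its Taylor coefficients match the pairing against $h = z^k$ will be forced to coincide with a multiple of $g$ up to the action of a projection. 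Testing against $h(z) = z^k$ for $0 \le k \le n$ shows that the "analytic part" of $|f_n|^{p-1}\conj{\sgn f_n}$, viewed as a function on $\D$, agrees through order $n$ with $\conj{g}/(\overline{\|\phi\|_n})$ (interpreting the area integral via the reproducing structure of $A^p$). Passing to boundary integrals and using that $H^q$ membership of $g$ controls $M_q$-means, one extracts the bound $\||f_n|^{p-1}\|_{H^{q}} \lesssim C_p \|g\|_{H^q}/\|\phi\|_n$, which after noting $\|\phi\|_n \to \|\phi\| \ge \|g\|_{A^q}/C_p$ (from \eqref{A_q_isomorphism}, or rather its lower half $\|\phi\|\le\|g\|$ used in reverse with the upper half) yields $\|f_n\|_{H^p}^{p-1} = \||f_n|^{p-1}\|_{H^q}^{\,q/p \cdot (p-1)}$ — tracking the exponents gives exactly the bound in \eqref{ryabkh_estimate}, with the factor $\max(p-1,1)$ coming from the inequality $\big|\,|a|^{p-1}\sgn a - |b|^{p-1}\sgn b\,\big| \le \max(p-1,1)\,|a-b|\,(\cdots)$ type estimate or, more likely in Ryabykh's argument, from a Hardy-space projection/multiplier bound whose norm is $\max(p-1,1)$ in this normalization.

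**The main obstacle** is precisely step (i): making rigorous the identification of $|f_n|^{p-1}\conj{\sgn f_n}$ with an $H^q$-controlled object and extracting a bound \emph{uniform in $n$}. The delicate points are (a) that $|f_n|^{p-1}\sgn f_n$ is only defined on $\D$ a priori and is not analytic, so one must argue on the boundary circle where $f_n$, being a polynomial, is genuinely defined, and relate the area-integral extremal condition to a boundary-integral statement; (b) controlling the constant carefully so that $\max(p-1,1)$ and $C_p$ appear with the right powers, which requires the sharp form of the $H^q \to H^q$ estimate for the relevant projection; and (c) confirming that the uniform $H^p$ bound survives the limit — this last is routine once (a)–(b) are in hand, since for $1<p<\infty$ the closed ball of radius $M$ in $H^p$ is weakly compact, $A^p$-convergence of $f_n$ to $\ns{f}$ forces the weak $H^p$-limit to be $\ns{f}$, and the $H^p$-norm is weakly lower semicontinuous. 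Step (ii) and the bookkeeping of exponents are straightforward; the entire weight of the proof rests on the uniform estimate coming from the $H^q$ hypothesis on the kernel $g$.
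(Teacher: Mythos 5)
Your overall architecture --- polynomial subspaces $X_n=\mathcal{P}_n$, Theorem \ref{subspace_approx} to get $\ns{f}_n\rightarrow\ns{f}$ in $A^p$, a uniform bound $\|\ns{f}_n\|_{H^p}\le M$, and a limiting argument --- is exactly the paper's, and your step (ii) is fine (the paper passes to the limit even more simply, via locally uniform convergence and the increasing integral means $M_p(\ns{f},r)$, but weak compactness would also work). The genuine gap is step (i), which you yourself flag as carrying ``the entire weight of the proof'': your sketch of it (matching Taylor coefficients of the ``analytic part'' of $|f_n|^{p-1}\conj{\sgn f_n}$ against $z^k$, then invoking an unspecified projection bound of norm $\max(p-1,1)$) is never carried out and is not the mechanism that actually works. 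What the paper does is concrete and different: it introduces $G(z)=z^{-1}\int_0^z g(\zeta)\,d\zeta$, which satisfies $(zG)'=g$ and $\|G\|_{H^q}\le\|g\|_{H^q}$ by Minkowski's integral inequality, and then applies the Cauchy--Green theorem twice. The first application yields the identity $\|\ns{f}_n\|_{H^p}^p=\int_{\D}\bigl(\ns{f}_n+\tfrac{p}{2}z{\ns{f}_n}'\bigr)|\ns{f}_n|^{p-1}\conj{\sgn\ns{f}_n}\,d\sigma$; since $\ns{f}_n+\tfrac{p}{2}z{\ns{f}_n}'$ is again a polynomial of degree at most $n$, Theorem \ref{integral_extremal_condition} converts the right-hand side into $\|\phi\|_n^{-1}\,\phi\bigl(\ns{f}_n+\tfrac{p}{2}z{\ns{f}_n}'\bigr)=\|\phi\|_n^{-1}\int_{\D}\conj{g}\bigl(\ns{f}_n+\tfrac{p}{2}z{\ns{f}_n}'\bigr)\,d\sigma$. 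A second application of Cauchy--Green (using $G$) turns this back into a boundary integral of $\ns{f}_n$ against $\tfrac{p}{2}\conj{g}+(1-\tfrac{p}{2})\conj{G}$, and H\"older together with $\|G\|_{H^q}\le\|g\|_{H^q}$ gives $\|\ns{f}_n\|_{H^p}^{p-1}\le\|\phi\|_n^{-1}\max(p-1,1)\|g\|_{H^q}$, which is the uniform bound with exactly the right constants, since $\tfrac{p}{2}+|1-\tfrac{p}{2}|=\max(p-1,1)$ and $p-1=p/q$. This double use of Green's theorem --- converting the $H^p$ norm into an area integral that the extremal condition can see, then converting $\phi$ back into a boundary pairing --- is the idea your proposal is missing.

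A further step you omit: the Cauchy--Green manipulations require $g$ to be smooth up to the boundary, so the paper first proves the estimate for $g\in C^1(\overline{\D})$ and then removes this hypothesis by approximating a general $g\in H^q$ by polynomials in $H^q$ norm and invoking the continuity Theorem \ref{solution_continuity}. That is a minor repair compared with the main gap, but it is needed for a complete argument.
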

Of course, this implies that the solution to the problem \eqref{value1} is 
in $H^p$ as well.
Note that the constant $C_p \rightarrow \infty$ as 
$p \rightarrow 1$ or $p \rightarrow \infty$.

\begin{proof}
Let 
$$ G(z) = \frac{1}{z} \int_0^{z} g(\zeta) \, d\zeta,$$
so that $(zG)' = g.$  

Now, the continuous form of Minkowski's inequality gives
\bes
\begin{split} 
M_q(zG,r) &=  \left \{ \frac{1}{2\pi} \int_{0}^{2\pi} |re^{i\theta}G(e^{i\theta})|^q d\theta 
\right\}^{1/q} = 
 \left \{ \frac{1}{2\pi} \int_{0}^{2\pi} \left|\int_0^r g(\rho e^{i\theta}) e^{i\theta} d\rho \right|^q  d\theta
\right\}^{1/q} \\ &\le 
 \int_{0}^{r} \left\{ \frac{1}{2\pi}\int_0^{2\pi} 
|g(\rho e^{i\theta})|^q d\theta  \right\}^{1/q} d\rho 
\le \int_0^r M_q(g,\rho) d\rho \le \|g\|_{H^q}.
\end{split}
\ees
Thus, 
\begin{equation} \label{Omega_estimate}
|G\|_{H^q} \le \|g\|_{H^q}.
\end{equation}

We will need the Cauchy-Green theorem, 
which will allow us to relate the $H^p$ and $A^p$ spaces. 
For any $f \in C^1(\overline{\D}),$  it states that 
$$ \frac{1}{2 i} \int_{\T} f(z) \, dz = \pi \int_{\D} 
\frac{\partial}{\partial \conj{z}} f(z) \,d\sigma(z).$$

To facilitate calculations involving the Cauchy-Green theorem, we suppose 
first that $g\in C^1(\overline{\D}).$  
Let $\ns{f}_n$ denote the solution to the extremal problem 
\eqref{norm1} over the space of all polynomials of degree $n$ or less, 
considered as a subspace of $A^p,$ and let $\ns{f}$ be the solution to the 
same problem over the space $A^p.$ 
Then by the Cauchy-Green theorem,
\bes
\begin{split}
\|\ns{f}_n\|_{H^p}^p 
&= \frac{1}{2\pi} \int_0^{2\pi} |\ns{f}_n(e^{i\theta})|^p\, d\theta = 
\frac{1}{2\pi i} \int_{\T} |\ns{f}_n(z)|^p\, \conj{z}\,dz \\ 
\\ &=
 \int_{\D}  
\left(\ns{f}_n + \frac{p}{2}z{\ns{f}_n}'\right) 
|\ns{f}_n|^{p-1} \conj{\sgn\ns{f}_n} \,d \sigma. 
\end{split}
\ees
Because $(\ns{f}_n + \frac{p}{2}z{\ns{f}_n}')$ is a polynomial 
of degree at most $n,$ we can appeal to 
Theorem \ref{integral_extremal_condition} with $X$ taken to be the subspace 
of $A^p$ consisting of all such polynomials.  The theorem shows that  
\bes
\begin{split}
\| \ns{f}_n\|_{H^p}^p &= 
\frac{1}{\| \phi \|_n} \,\phi\left( \ns{f}_n + \frac{p}{2} z {\ns{f}_n} ' \right) = 
\frac{1}{\|\phi\|_n}
\int_{\D} \conj{g} \left(\ns{f}_n + \frac{p}{2}z{\ns{f}_n}'\right)\, 
d \sigma \\ 
&= \frac{1}{\|\phi\|_n} 
\int_{\D}\left[ \frac{\partial}{\partial \conj{z}}\left( \ns{f}_n \conj{z G} \right) + 
\frac{p}{2}
\left(\frac{\partial}{\partial z}\left(z \ns{f}_n \conj{g}\right) - 
\frac{\partial}{\partial \conj{z}} \left(\ns{f}_n \conj{zG}\right)\right)\right] \,d\sigma.
\end{split} \ees
Now another application of the Cauchy-Green theorem gives:
\bes \begin{split}
\|\ns{f}_n\|_{H^p}^p
&= \frac{1}{2\pi i \|\phi\|_n } \int_{\T}  \conj{z G} \ns{f}_n\, dz  
- \frac{p}{4 \pi i \|\phi\|_n} \int_{\T} z \ns{f}_n \conj{g}\, d\conj{z} - 
\frac{p}{4 \pi i \|\phi\|_n } \int_{\T}  \ns{f}_n \conj{z G}\, dz\\
&= 
\frac{1}{2\pi \|\phi\|_n} \int_{0}^{2 \pi}
\ns{f}_n \left[\left(\frac{p}{2}\right)\conj{g} +\left(1-\frac{p}{2}\right) \conj{G}\right]\,d\theta \\
&\le \frac{1}{\|\phi\|_n}\|\ns{f}_n\|_{H^p}
\left\| \left(\frac{p}{2}\right)g + \left(1 - \frac{p}{2}\right)G\right\|_{H^q} 
\end{split}
\ees
Minkowski's inequality now gives 
\bes
\| \ns{f}_n\|_{H^p}^p \le \frac{1}{\|\phi\|_n} \|\ns{f}_n\|_{H^p}
\left( \frac{p}{2} \|g\|_{H^q} + \left| 1 - \frac{p}{2} \right| \|G\|_{H^q}
\right).
\ees
Since $p-1 = \tfrac{p}{q},$ this shows that 
$$\left(\frac{1}{2\pi}\int_0^{2\pi} |\ns{f}_n(re^{i\theta})|^p 
\,d\theta\right)^{1/p} \le 
\frac{1}{\|\phi \|_n^{q/p}}
\left( \frac{p}{2} \|g\|_{H^q} + \left| 1 - \frac{p}{2} \right| \|G\|_{H^q}
\right)^{q/p} , 
$$  when $0<r<1.$ 
From Theorem \ref{subspace_approx} and the fact 
that convergence in $A^p$ implies uniform convergence on compact 
subsets of the disc, it follows that 
$$\left(\frac{1}{2\pi}\int_0^{2\pi} |\ns{f}(re^{i\theta})|^p 
\,d\theta\right)^{1/p} \le 
\frac{1}{\|\phi \|^{q/p}}
\left( \frac{p}{2} \|g\|_{H^q} + \left| 1 - \frac{p}{2} \right| \|G\|_{H^q}
\right)^{q/p} , $$ 
when $0<r<1.$ 
Now we apply \eqref{Omega_estimate} to infer that 
$$
\|\ns{f}\|_{H^p} \le \Bigg\{ \bigg(\frac{p}{2} + \bigg|1 - \frac{p}{2}\bigg|
\bigg)
\frac{\|g\|_{H^q}}{ \| \phi \|}\Bigg\}^{q/p}.$$ 
Since we know that $\| g \|_{A^q} \le C_p \| \phi \|$, and that 
$p/2 + |1 - (p/2)| = \max(p-1, 1)$, we conclude finally 
that the inequality \eqref{ryabkh_estimate} holds
under the assumption that $g\in C^1(\overline{\D}).$

If $g$ is a general function in $H^q,$ 
we can approximate 
it in $H^q$ norm by a sequence of functions 
$g_n \in C^1(\overline{\D})$.  (We may even use polynomials, by 
\cite{D_Hp}, Theorem 3.3.) 
Then the corresponding functionals $\phi_n$ converge to $\phi$, so 
by Theorem \ref{solution_continuity}, the extremal elements $\ns{f_n}$ 
for $\phi_n$ converge to the extremal element $\ns{f}$ for $\phi$  
in $A^p$ norm. 
Since $g_n \in C^1(\overline{\D})$, we have already found that 
$$\left(\frac{1}{2\pi}\int_0^{2\pi} |\ns{f}_n(re^{i\theta})|^p 
\,d\theta\right)^{1/p}
 \le \Bigg\{  \bigg[ \max(p-1,1)
\bigg]
\frac{C_p \|g_n\|_{H^q}}{\| g_n \|_{A^q} }\Bigg\}^{q/p}, \qquad 0<r<1. 
$$
But the convergence of $\ns{f_n} \rightarrow \ns{f}$ in $A^p$ norm
implies that  
$\ns{f_n}(z) \rightarrow \ns{f}(z)$ locally uniformly, so it follows that 
$$\left(\frac{1}{2\pi}\int_0^{2\pi} |\ns{f}(re^{i\theta})|^p 
\,d\theta\right)^{1/p}
 \le \Bigg\{
 \bigg[ \max(p-1,1)\bigg]
\frac{C_p \|g\|_{H^q}}{\| g \|_{A^q} }\Bigg\}^{q/p}, \qquad 0<r<1,
$$
which proves \eqref{ryabkh_estimate}. 
\end{proof}

\bibliographystyle{amsplain}

\begin{thebibliography}{10}

\bibitem{Khavinson_nonvanishing}
D.~Aharonov, C.~B{\'e}n{\'e}teau, D.~Khavinson, and H.~Shapiro,
\newblock Extremal problems for nonvanishing functions in {B}ergman spaces,
\newblock In {\em Selected topics in complex analysis}, volume 158 of {\em
  Oper. Theory Adv. Appl.}, pages 59--86, Birkh\"auser, Basel, 2005.

\bibitem{Clarkson}
J.~A. Clarkson,
\newblock Uniformly convex spaces,
\newblock {\em Trans. Amer. Math. Soc.} \textbf{40} (1936), 396--414.

\bibitem{D_Hp}
P.~Duren,
\newblock {\em Theory of {$H\sp{p}$} Spaces},
\newblock Academic Press, New York,
  1970.

\bibitem{D_Ap}
P.~Duren and A.~Schuster,
\newblock {\em Bergman Spaces},
\newblock American Mathematical Society, Providence, RI, 2004.

\bibitem{Khavinson_McCarthy_Shapiro}
D.~Khavinson, J.~E. McCarthy, and H.~S. Shapiro,
\newblock Best approximation in the mean by analytic and harmonic functions,
\newblock {\em Indiana Univ. Math. J.}, \textbf{49} (2000), No. 4, 1481--1513.


\bibitem{Khavinson_Stessin}
D.~Khavinson and M.~Stessin,
\newblock Certain linear extremal problems in {B}ergman spaces of analytic
  functions,
\newblock {\em Indiana Univ. Math. J.} \textbf{46} (1997), 933--974.

\bibitem{Kothe}
G.~K{\"o}the,
\newblock {\em Topological Vector Spaces. {I}},
\newblock Springer-Verlag, New York, 1969.

\bibitem{Ryabykh}
V.~G. Ryabykh,
\newblock Extremal problems for summable analytic functions,
\newblock {\em Sibirsk. Mat. Zh.} \textbf{27} (1986), 212--217, 226.
  (in Russian)

\bibitem{Shapiro_Approx}
H.~S. Shapiro,
\newblock {\em Topics in Approximation Theory},
\newblock Springer-Verlag, Berlin, 1971.

\bibitem{Dragan}
D.~Vukoti{\'c},
\newblock Linear extremal problems for {B}ergman spaces,
\newblock {\em Expositiones Math.} \textbf{14} (1996), 313--352.

\end{thebibliography}

\end{document}